\documentclass[a4paper,11pt]{amsart}
\usepackage{amssymb,amsfonts, amsmath, amsthm}
\usepackage{todonotes}
\usepackage[mathscr]{euscript}
\usepackage{enumerate}
\usepackage{enumitem}
\usepackage{verbatim}
\usepackage{mathtools}
\usepackage{url}
\usepackage{tikz-cd}
\usepackage{xypic}
\usepackage{geometry}
\usepackage[cmtip,all]{xy}
\usepackage[pagebackref, colorlinks=true, linkcolor=blue, citecolor = red]{hyperref}
\usepackage{geometry}
\geometry{margin=20mm}

\usepackage[nameinlink,capitalise,noabbrev]{cleveref}
\crefname{subsection}{Subsection}{Subsection}
\crefname{equation}{Diagram}{Diagram}

 \renewcommand*{\backref}[1]{}
 \renewcommand*{\backrefalt}[4]{({%
     \ifcase #1 Not cited.%
           \or On p.~#2%
           \else On pp.~#2%
     \fi%
     })}

\newtheorem{theorem}{Theorem}[section]
\newtheorem{lemma}[theorem]{Lemma}
\newtheorem{proposition}[theorem]{Proposition}
\newtheorem{corollary}[theorem]{Corollary}

\theoremstyle{definition}
\newtheorem{definition}[theorem]{Definition}

\theoremstyle{remark}
\newtheorem{remark}[theorem]{Remark}

\numberwithin{equation}{section}

\DeclareMathAlphabet{\mathbbe}{U}{bbold}{m}{n}

\def\DDelta{{\mathbbe{\Delta}}}
\newcommand{\DD}{\DDelta}

\newcommand{\C}{\mathscr{C}}
\newcommand{\s}{\mathscr{S}}
\newcommand{\set}{\mathscr{S}\mathrm{et}}
\newcommand{\cat}{\mathscr{C}\mathrm{at}}
\newcommand{\sset}{s\set}
\newcommand{\sS}{s\mathscr{S}}
\newcommand{\Tw}{\mathrm{Tw}}
\newcommand{\Hom}{\mathrm{Hom}}
\newcommand{\Map}{\mathrm{Map}}
\newcommand{\Fun}{\mathrm{Fun}}
\newcommand{\Ho}{\mathrm{Ho}}
\newcommand{\id}{\mathrm{id}}
\newcommand{\CoEq}{\mathscr{C}\mathrm{o}\mathscr{E}\mathrm{q}}

\title{Twisted Arrow Construction for Segal Spaces}
\date{January 2026}

\author{Chirantan Mukherjee}
\address{Ontario Research Centre for Computer Algebra, University of Western Ontario, London, Ontario, Canada}
\email{cmukher@uwo.ca}

\author{Nima Rasekh}
\address{Institut f\"ur Mathematik und Informatik, Universität Greifswald, Greifswald, Germany}
\email{nima.rasekh@uni-greifswald.de}

\keywords{Higher category theory, complete Segal spaces, twisted arrow construction, left fibrations}
\subjclass[2020]{18N60; 18N40; 18N50; 18N45.}

\begin{document}

\begin{abstract}
    We give an explicit description of the twisted arrow construction for simplicial spaces and demonstrate individually that it preserves the defining properties of a complete Segal space. Moreover, we show that for a Segal space, the natural projection from the twisted arrow Segal space is a left fibration.
\end{abstract}

\maketitle

\section{Introduction}

\subsection{Twisted arrow categories}
\emph{Twisted arrow categories} have proven extremely useful in the study of a variety of categorical settings, such as the study of the Yoneda embedding and computations of (lax) limits and (co)ends \cite{maclane1971categories,loregian2021coend}, categorical logic \cite{lawvere1970doctrines}, configuration spaces \cite{segal1973configuration}, algebraic $K$-theory \cite{waldhausen1985algebraick}, and the study of exponentiable fibrations \cite{johnstone1999exp,bungeniefield2000exp}.

More precisely, for a given category $\C$, the twisted arrow category is a functor of the form $\Tw\C \to \C^{op} \times \C$, which is in fact a discrete Grothendieck opfibration \cite{grothendieck1995descent}, corresponding to the Hom set functor: $\Hom(-,-)\colon\C^{op} \times \C \to \set$. It is precisely those fibrational properties and correspondence that is the foundation of the aforementioned applications of the twisted arrow construction.

Category theory has since been generalized to what is now called \emph{higher category theory}, $(\infty,1)$-category theory or simply $\infty$-category theory \cite{bergner2010surveyone}. Similar to the $1$-categorical situation, higher categorical analogues of the twisted arrow construction have also found a variety of applications, such as in derived geometry \cite{lurie2017ha,lurie2011dagx}, algebraic $K$-theory of higher categories \cite{barwick2017mackey,barwickglasmannardin2018dualizingfibrations,boors2018waldhausen}, computing lax $\infty$-limits \cite{gepnerhaugsengnikolaus2017laxlimits}, and decomposition spaces \cite{hackneykock2022culf}.

Given its importance, it is unsurprising that the twisted arrow construction has been considered in various models of higher categories, such as quasi-categories \cite{lurie2017ha}, $2$-Segal spaces \cite{boors2020twosegal} and internal $\infty$-categories \cite{martini2021yoneda}. However, while the 1-categorical twisted arrow construction is reasonably straightforward, the higher categorical versions tend to be more involved, often relying on advanced machinery specific to the model at hand. What is needed, then, is a straightforward, minimal, and broadly accessible approach to the $\infty$-categorical twisted arrow construction in a key model of higher categories. This is the aim of this work.

\subsection{Construction of twisted arrow complete Segal spaces}
In this work we focus on the twisted arrow construction in the context of complete Segal spaces, which is a prominent model of $(\infty,1)$-categories introduced by Rezk \cite{rezk2001css} and further studied in a variety of settings \cite{toen2005unicity,joyaltierney2007segalvqcat}. First, in \cref{thm:completeness} we give a short and self-contained proof that the twisted arrow construction preserves the three defining characteristics of a complete Segal space (Reedy fibrancy, Segal condition and completeness condition), only assuming basic aspects of the complete Segal space model structure as given in the original paper \cite{rezk2001css}. 

The definition of the complete Segal space twisted arrow construction employed is directly analogous to other constructions, such as \cite[Definition 4.2.4]{martini2021yoneda} in the context of internal $\infty$-categories. Moreover, some conditions can already be found in the literature, such as the Segal conditions (\cite[Theorem 2.11]{boors2020twosegal}, \cite[Proposition 4.2.5]{martini2021yoneda}) or the completeness condition (\cite[Proposition 4.2.5]{martini2021yoneda}). However, we are unaware of an explicit proof of the Reedy condition in the literature.

Beyond that, we show that the natural projection map from the twisted arrow Segal space is a left fibration (\cref{thm:left fib}). This shows that being a left fibration does not require the completeness condition. This result generalizes analogous results regarding the relation between completeness and over-categories in the context of Segal spaces \cite[Theorem 3.44]{rasekh2023yoneda}, \cite[Proposition 8.13]{riehlshulman2017rezkobject}.

While this result has been anticipated, no explicit statement can be found in the literature. Indeed, the model of quasi-categories does not allow an easy separation of the completeness condition, as it is built into the definition.\footnote{More explicitly, quasi-categories are directly equivalent to \emph{complete} Segal spaces \cite{joyaltierney2007segalvqcat}.} In the context of internal $\infty$-categories an analogous result can be extracted from the proof of \cite[Proposition 4.2.5]{martini2021yoneda}.

\subsection{Acknowledgment}
We would like to thank the anonymous referee for pointing to an error in a previous version. We would also like to thank the second referee for their suggestions, which greatly improved the exposition, and particularly an elegant suggestion on how to significantly improve the proof of \cref{lemma:twisted equiv}. The first author would like to thank the University of Trento for financial support in the form of a Thesis Research Abroad Scholarship. The second author is grateful to the Max Planck Institute for Mathematics in Bonn for its hospitality and financial support. 

\section{Reviewing Concepts}
This section establishes background concepts regarding the twisted arrow category (\cref{subsec:twisted}), simplicial sets (\cref{subsec:sset}), simplicial spaces (\cref{subsec:simplicial spaces}), and complete Segal spaces as a model of $(\infty,1)$-categories (\cref{subsec:css}). We will take standard categorical background for granted and refer the reader to \cite{maclane1971categories,riehl2017context} for further details. We will also assume basic aspects of model category theory. We refer the reader to \cite{hovey1999modelcategories,hirschhorn2003modelcategories} for foundational definitions and review here only the relevant aspects, such as the Kan model structure (\cref{subsec:Kan Model Structure}), the Reedy model structure (\cref{subsec:reedy model}), and the model structure for complete Segal spaces (\cref{subsec:complete Segal space Model Structure}).

\subsection{The Twisted Arrow Category} \label{subsec:twisted}
Let us quickly review the twisted arrow construction for ordinary categories. Given a category $\C$, let $\Tw \C$ be the category defined as follows:
\begin{itemize}[leftmargin=*]
    \item Objects are morphisms in $\C$
    \item A morphism from $f\colon x \to y$ to $g\colon z \to w$ is a commutative diagram of the form
    \[ 
    \begin{tikzcd}
    x \arrow[d, "f"'] & z \arrow[d, "g"] \arrow[l, "u"']\\
    y \arrow[r, "v"'] & w 
    \end{tikzcd} 
    \]
\end{itemize}
Composition is defined in the obvious way. There is a natural projection functor $\Tw \C \to \C^{op} \times \C$, the \emph{twisted arrow projection}, sending an object $f\colon x \to y$ in $\Tw\C$ to the object $(x,y)$ and a morphism given by the above commutative diagram to the morphism $(u,v)\colon (x,y) \to (z,w)$ in $\C^{op} \times \C$. This functor is a discrete Grothendieck opfibration and the corresponding functor $\C^{op} \times \C \to \set$ is the Hom set functor $\Hom_\C(-,-)$. For more details regarding the correspondence, see \cite[Theorem 2.1.2]{loregianriehl2020fibrations}.

\subsection{Simplicial Sets} \label{subsec:sset}
Let $\DD$ be the simplex category, with objects $[n] = \{0 < ... < n\}$ and morphisms order-preserving maps. The category of simplicial sets $\s$, also called \emph{spaces}, is defined as the functor category $\Fun(\DD^{op}, \set)$. We will use the following notation with regard to spaces:
 \begin{enumerate}[leftmargin=*]
  \item $\Delta[n]$ denotes the simplicial set representing $[n]$ i.e., $\Delta[n]_k = \Hom_{\DD}([k], [n])$. 
  \item $\partial \Delta[n]$ denotes the boundary of $\Delta[n]$ i.e., the largest sub-simplicial set which does not include $\mathrm{id}_{[n]}\colon [n] \to [n]$.
  \item $\Lambda[n]_i$ denotes the $i^{th}$ horn of $\Delta[n]$, i.e., the largest sub-simplicial set of $\Delta[n]$ which does not include the face map $d_i\colon [n-1] \to [n]$. 
 \end{enumerate}
 See \cite[Section I.1]{goerssjardine2009simplicialhomotopytheory} for a more detailed introduction.

\subsection{The Kan Model Structure} \label{subsec:Kan Model Structure}
The category of spaces has a Kan model structure, uniquely characterized as follows:
 \begin{itemize}
  \item[(F)] A map $f\colon Y \to X$ is a \emph{Kan fibration} if it has the right lifting property with respect to all horn inclusions $\Lambda[n]_i \to \Delta[n]$ for all $n \geq 1$ and $0 \leq i \leq n$.
  \item[(F$\cap$W)] A map $f\colon Y \to X$ is a \emph{trivial Kan fibration} if it has the right lifting property with respect to all boundary inclusions $\partial \Delta[n] \to \Delta[n]$ for all $n \geq 0$.  
  \item[(C)] A map $f\colon A \to B$ is a \emph{Kan cofibration} if it is a monomorphism.
 \end{itemize}
 In particular, the Kan model structure includes a notion of \emph{Kan equivalence}, which intuitively corresponds to the notion of \emph{weak homotopy equivalence} of spaces. Moreover, the fibrant objects are precisely the \emph{Kan complexes}. See \cite[Section I.11]{goerssjardine2009simplicialhomotopytheory} for further details.

\subsection{Simplicial Spaces} \label{subsec:simplicial spaces}
A \emph{simplicial space} is a functor $X \colon \DD^{\mathrm{op}} \to \sset$. Let $\sS = \Fun(\DD^{op}, \s) $ denote the category of simplicial spaces. We have the following basic notation with regard to simplicial spaces:
 \begin{enumerate}[leftmargin=*]
  \item For a given simplicial space $X$, we denote its value at $[n]$ by $X_n$, which is a space.
  \item We embed the category of spaces inside the category of simplicial spaces as constant simplicial spaces (i.e., the simplicial spaces $S$ such that $S_n = S_0$ for all $n$ and all simplicial operator maps are identities). 
  \item There is a functor $N\colon\cat \to \sS$, called the \emph{nerve} functor, defined by 
  \[(N\C)_{kn} = \Fun([k], \C).\]
  \item Let $F(n)$ be the simplicial space defined as $F(n)_{kl} = \Delta[n]_k = \Hom_{\Delta}([k],[n])$. Moreover, $\partial F(n)$ denotes the boundary of $F(n)$, meaning $\partial F(n)_{kl} = \partial \Delta[n]_k$. Alternatively, $F(n) = N[n]$.
  \item We will denote a morphism $[n] \to [m]$, and the corresponding morphism $F(n) \to F(m)$, by a sequence of numbers $\langle a_0, \dots, a_n \rangle$, where $a_i$ is the image of $i \in [n]$. 
  \item Denote $G(n)$ to be the sub-simplicial space of $F(n)$, with $G(n)_k \subseteq \Hom([k],[n])$ consisting of all $\varphi\colon [k] \to [n]$ such that $\varphi(k)-\varphi(0) \leq 1$. Following \cite[Section 5]{rezk2001css}, it is also described as a colimit
  \[G(n) \cong F(1) \coprod_{F(0)} ... \coprod_{F(0)} F(1). \]
  \item The category $\sS$ is enriched over spaces
  \[\Map_{\sS}(X,Y)_n = \Hom_{\sS}(X \times \Delta[n], Y).\]
  Here $\Delta[n]$ is the simplicial space given via the embedding defined above.
  \item By the Yoneda lemma, for a simplicial space $X$ we have a bijection of spaces
  \[X_n \cong \Map_{\sS}(F(n),X).\] 
  \end{enumerate}

 \subsection{Reedy Model Structure} \label{subsec:reedy model}
 The category of simplicial spaces has a Reedy model structure \cite{reedy1974modelstructure}, see also \cite[Section 2.4]{rezk2001css}, defined as follows:
 \begin{itemize}
  \item[(F)] A map $f\colon Y \to X$ is a (trivial) Reedy fibration if for each $n \geq 0$ the following map of spaces is a (trivial) Kan fibration
  $$ \Map_{\sS}(F(n),Y) \to \Map_{\sS}(\partial F(n),Y) \underset{\Map_{\sS}(\partial F(n), X)}{\times} \Map_{\sS}(F(n), X).$$
  \item[(W)] A map $f\colon Y \to X$ is a Reedy equivalence if it is a level-wise Kan equivalence.
  \item[(C)] A map $f\colon Y \to X$ is a Reedy cofibration if it is a monomorphism.
 \end{itemize}
 Unwinding definitions, an object $W$ is \emph{Reedy fibrant}, if for all $n \geq 0$, the map  
 \[\Map(F(n),W) \to \Map(\partial F(n),W) \]
 is a Kan fibration.

 Intuitively, the Reedy model structure models the homotopy theory of simplicial objects in spaces. Note that the Reedy model structure is \emph{simplicial}, meaning for every cofibration $i\colon A \to B$ of simplicial spaces and every fibration $p\colon Y \to X$ of simplicial spaces, the induced map of spaces
 \[ \Map_{\sS}(B,Y) \to \Map_{\sS}(A,Y) \underset{\Map_{\sS}(A,X)}{\times} \Map_{\sS}(B,X) \]
 is a fibration of spaces, which is a trivial fibration if either $i$ or $p$ is a weak equivalence. See \cite[Chapter 15]{hirschhorn2003modelcategories} for more details.

\subsection{(Complete) Segal Spaces} \label{subsec:css}
\emph{Complete Segal spaces} were introduced by Rezk as a model of $(\infty,1)$-categories \cite{rezk2001css}. See also \cite{rasekh2018introduction,mukherjee2022css} for further review. They are defined as simplicial spaces $W$ satisfying the following conditions:
 \begin{itemize}[leftmargin=*]
  \item \textbf{Reedy:} $W$ is Reedy fibrant.
  \item \textbf{Segal:} For each $n \geq 2$, the canonical map induced by the inclusions $G(n) \hookrightarrow F(n)$
  \[ W_n \cong \Map_{\sS}(F(n), W) \to \Map_{\sS}(G(n), W) \cong \underbrace{ W_1 \underset{{W_0}}{\times} \cdots \underset{{W_0}}{\times} W_1 }_{n-\;factors} \]
  is a Kan equivalence of spaces. By induction, this condition is equivalent to the map $W_n \xrightarrow{\simeq} W_{n-1} \times_{W_0} W_1$ being an equivalence, for all $n \geq 2$.
  \item \textbf{Complete:} The canonical map induced by the inclusion $F(0) \hookrightarrow NI$
  \[ \Map_{\sS}(NI, W) \to \Map_{\sS}(F(0), W) \cong W_0\]
  is a Kan equivalence of spaces, where $I$ is the category with two objects and a single isomorphism between them. 
\end{itemize}
If a simplicial space satisfies only the Reedy and Segal conditions, we call it a \emph{Segal space}. Already for a Segal space $W$, one can define a homotopy category $\Ho(W)$, with objects the points of $W_0$ and morphisms given by the path components of the mapping spaces defined via the pullback
\[ \Hom_{\Ho W}(x,y) = \pi_0(\{x\} \underset{W_0}{\times} W_1 \underset{W_0}{\times} \{y\}). \]
A morphism in the Segal space $W$ is a \emph{homotopy equivalence} if the corresponding class in $\Ho(W)$ is an isomorphism. We can hence define the subspace of \emph{homotopy equivalences} $W_{hoequiv} \subseteq W_1$, which has the property $W_{hoequiv} \simeq \Map_{\sS}(NI, W)$. This means the completeness condition can be rephrased as the requirement that the map
\[ W_{hoequiv} \to W_0 \]
is a Kan equivalence of spaces. See \cite[Section 5]{rezk2001css} for more details.

\subsection{(Complete) Segal Space Model Structure} \label{subsec:complete Segal space Model Structure}
We can modify the Reedy model structure on simplicial spaces to obtain two new model structures, the \emph{Segal space model structure} and the \emph{complete Segal space model structure}, which are simultaneously characterized as follows:
 \begin{itemize}[leftmargin=*]
  \item[(F)] A map $f\colon Y \to X$ between (complete) Segal spaces is a \emph{[trivial] (complete) Segal space fibration} if it is a [trivial] Reedy fibration.
  \item[(W)] A map $f\colon Y \to X$ is a \emph{(complete) Segal space equivalence} if for every (complete) Segal space $W$, the induced map of spaces
  \[ \Map_{\sS}(X, W) \to \Map_{\sS}(Y, W) \]
  is a Kan equivalence.
  \item[(C)] A map $f\colon Y \to X$ is a \emph{cofibration} if it is a monomorphism.
 \end{itemize}
 Note that the (complete) Segal space model structure is still simplicial. The complete Segal space model structure models the homotopy theory of $(\infty,1)$-categories, and is also known as the \emph{Rezk model structure}. See \cite[Section 7]{rezk2001css} for more details.

\subsection{Left Fibrations of Simplicial Spaces} \label{subsec:left fibrations}
Finally, we review \emph{left fibrations} of simplicial spaces. A map $p\colon L \to X$ of simplicial spaces is called a \emph{left fibration} if it satisfies two conditions:
\begin{itemize}
    \item \textbf{Reedy:} $p$ is a Reedy fibration.
    \item \textbf{Lifting:} for every $n \geq 1$, the following square is a homotopy pullback square of spaces:
    \[
    \begin{tikzcd}
    L_n \arrow[r, "\{0\}^*"'] \arrow[d, "p_n"'] & L_0 \arrow[d, "p_0"] \\
    X_n \arrow[r, "\{0\}^*"'] & X_0
    \end{tikzcd},
    \]
    where $\{0\}: [0] \to [n]$ is the unique map with image $0$ in $[n]$.
\end{itemize}
Intuitively, left fibrations are a higher categorical analogue of discrete Grothendieck opfibrations and model functors valued in spaces, which is made precise via the \emph{straightening construction}. See \cite{rasekh2023yoneda} for further details. Finally, via an elementary, but combinatorially involved argument, if $X$ is a Segal space, then the lifting condition reduces to the case $n =1$ \cite[Lemma $3.29$]{rasekh2023yoneda}.

\section{Twisted Arrow Simplicial Spaces}
In this section we define the twisted arrow construction for simplicial spaces $\Tw\colon \sS \to \sS$. Before we present a definition, we aim to provide a motivation by outlining a few key properties we would reasonably expect it to satisfy.

\begin{itemize}[leftmargin=*]
    \item \textbf{Lifting:} $\Tw$ lifts the twisted arrow category, meaning for a given category $\C$, $\Tw N\C = N\Tw\C$.
    \item \textbf{Simplicial Enrichment}: $\Tw$ is simplicially enriched, meaning for a simplicial set $S$ and a simplicial space $X$, $\Tw(X \times S) = \Tw(X) \times S$.
    \item \textbf{Colimit Preservation}: $\Tw$ preserves colimits.
\end{itemize}
Observe that, due to the first condition, we already know the functor $\Tw\colon \DD \to \sS$, is given by 
\[\Tw(F(n)) = \Tw(N[n])  = N \Tw([n]).\]
Following \cite[Theorem 4.51]{kelly2005enriched}, every simplicially enriched, colimit-preserving functor $\sS \to \sS$ is uniquely determined by its restriction to $\DD$, via left Kan extension. We thus obtain the following definition.

\begin{definition}
    Let $\Tw\colon \sS \to \sS$ be the following simplicially enriched left Kan extension
    \[
    \begin{tikzcd}
        \DD \arrow[d, "\textrm{Yon}"'] \arrow[r, hookrightarrow] &  \cat \arrow[r, "\Tw"] &  \cat \arrow[r, "N"] & \sS \\
        \sS \arrow[urrr, dashed, "\Tw"']
    \end{tikzcd} 
    \]
\end{definition}

Before we proceed with our main goal, namely, how this definition interacts with complete Segal spaces, we present a more explicit characterization, relying on the existing literature, such as \cite[Construction 5.2.1.1]{lurie2017ha} and \cite[Definition 4.2.4]{martini2021yoneda}.

For this next definition, let $\star$ denote the join of posets. In particular, recall that $[n] \star [m] \cong [n+m+1]$. 
\begin{definition}
    Let $Q\colon \DD \to \DD$ be the functor defined on objects by $Q([n]) = [n]^{op} \star [n]$, and on morphisms as $Q(\alpha) = \alpha^{op} \star \alpha$.
\end{definition}

We now have the following key observation. Recall here that $Q^*\colon \sS \to \sS$ is the precomposition functor defined by $Q$, using our definition $\sS = \Fun(\DD^{op}, \sset)$.

\begin{proposition}
    There is a natural isomorphism $\Tw \cong Q^*$.
\end{proposition}

\begin{proof}
 Following \cite[Theorem 4.51]{kelly2005enriched}, we only need to compare the two functors on the representable objects $F(n)$. However, by \cite[Construction 5.2.1.1]{lurie2017ha}, 
 $Q^*(F(n)) = N \Tw([n])$, which, by the first defining condition of $\Tw$ is precisely $\Tw(F(n))$. Hence, the result follows.
\end{proof}

\begin{remark}
    This result shows that our definition of $\Tw$ agrees with the definition of Martini \cite[Definition 4.2.4]{martini2021yoneda} in the context of internal $\infty$-categories.
\end{remark}

\section{Preservation Properties of the Twisted Arrow Construction}
We now show the twisted arrow construction preserves the defining properties of a complete Segal space.

\subsection{Reedy Fibrancy} \label{subsec:reedy}
We commence by showing that $\Tw$ preserves Reedy fibrancy. Before we proceed to the proof, we make the following observation regarding Reedy fibrancy of $\Tw W$. Let us define $\partial_{\Tw}F(2n+1)$ as the following coequalizer diagram
\begin{center}
    \begin{tikzcd}
    {\underset{0\le i< j\le n}{\coprod} F(2n-3)} \arrow[r, shift left] \arrow[r, swap, shift right] & {\underset{0\le i\le n}{\coprod}F(2n-1)}\arrow[r] & \partial_{\Tw} F(2n+1)
    \end{tikzcd}.
\end{center}
Using the fact that $\Tw$ preserves colimits and $\partial F(n)$ is precisely given via an analogous coequalizer diagram \cite[Proposition 2.3]{goerssjardine2009simplicialhomotopytheory}, it follows that $\Map(\partial F(n),\Tw W) \cong \Map(\partial_{\Tw} F(2n+1),W)$, which means in order to establish the Reedy fibrancy of $\Tw(W)$ we only need to show the map 
\[\Map(F(2n+1),W) \to \Map(\partial_{\Tw} F(2n+1),W)\]
is a Kan fibration.

We now proceed towards this goal by proving a technical lemma.

\begin{lemma} \label{lemma:sets}
     Let $\{A_i\}_{i \in I}$ be a collection of sets, and for each $i,j \in I$ let $f_{ij}, g_{ij}\colon A_{ij} \to A_i$ be injections. Assume there is a commutative diagram of sets 
    \begin{center}
        \begin{tikzcd}
        {\underset{i,j \in I}{\coprod} A_{ij}} \arrow[r, shift left] \arrow[r, swap, shift right] & {\underset{i \in I}{\coprod}A_i}\arrow[r] & Q
        \end{tikzcd}
    \end{center}
    and let $E$ be the coequalizer of the diagram. Then the induced map $E \to Q$ is injective if for all $i \in I$ the restriction $A_i \to Q$ is injective and for all $i \neq j$, $A_{ij} \cong A_i \times_Q A_j$.
\end{lemma}

\begin{proof}
Recall that the set $E$ is given by the quotient $\coprod_I A_i /\sim$, where $\sim$ is the equivalence relation generated by $f_{ij}(\gamma) \sim g_{ij}(\gamma)$ for all $\gamma \in A_{ij}$ and all $i,j \in I$. Let us assume two elements $\alpha,\beta$ in $E = \coprod_I A_i /\sim$ map to the same element in $Q$. If $\alpha,\beta \in A_i$, for some $i \in I$, then the desired result follows from the injectivity of the map $A_i \to Q$. Otherwise, by the pullback assumption, there exists a $\gamma \in A_{ij}$ such that $g_{ij}(\gamma) = \alpha$ and $f_{ij}(\gamma) = \beta$, meaning $\alpha \sim \beta$, which by definition means $\alpha = \beta$ in $E$. Hence, we are done.
\end{proof}
\begin{lemma} \label{lemma:reedy technical}
 The map $\partial_{\Tw} F(2n+1) \to F(2n+1)$ is a cofibration of simplicial spaces.
\end{lemma}

\begin{proof}
 By definition we need to show the map is injective. As $\partial_{\Tw} F(2n+1)$ is described as a coequalizer, we will apply \cref{lemma:sets}. First, the individual maps $F(2n-1) \to F(2n+1)$ are clearly injective. Next, by construction the intersection of the sets $F(2n-1)_k$ inside $F(2n+1)_k$ is given by $F(2n-3)_k$, meaning we have the following pullback square 
    \[
        \begin{tikzcd}
            F(2n-3)_k \arrow[r, "f"] \arrow[dr, phantom, "\lrcorner", very near start] \arrow[d, swap, "g"] & F(2n-1)_k \arrow[d] \\
            F(2n-1)_k \arrow[r] & F(2n+1)_k 
        \end{tikzcd}
    \] 
    The result now follows from \cref{lemma:sets}.
\end{proof}

With this lemma in hand, we can now establish the Reedy fibrancy of $\Tw W$. We will in fact prove a stronger result. 

\begin{proposition} \label{prop:reedy}
    Let $Y \to X$ be a Reedy fibration of simplicial spaces. Then the induced map $\Tw Y \to \Tw X$ is also a Reedy fibration.
\end{proposition}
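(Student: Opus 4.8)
The plan is to start from the reduction established just before the statement: it suffices to show that the map $\Map(F(2n+1),W)\to\Map(\partial_{\Tw}F(2n+1),W)$ is a Kan fibration for every $n\ge 0$. The first observation I would make is that $\partial_{\Tw}F(2n+1)$ is nothing but $\Tw(\partial F(n))$. Indeed, applying the colimit-preserving functor $\Tw$ to the coequalizer presentation of $\partial F(n)$ from \cite[Proposition 2.3]{goerssjardine2009simplicialhomotopytheory}, and using $\Tw F(k)=F(2k+1)$, turns that coequalizer into exactly the one defining $\partial_{\Tw}F(2n+1)$.

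Second, I would check that the inclusion $\partial_{\Tw}F(2n+1)\hookrightarrow F(2n+1)$ is a monomorphism of simplicial spaces. Since it is obtained by applying $\Tw$ to the monomorphism $\partial F(n)\hookrightarrow F(n)$, this reduces to the claim that $\Tw$ preserves monomorphisms. By the preceding lemma $\Tw$ acts level-wise as $\Tw^{\sset}$, and $\Tw^{\sset}(S)_m\cong S_{2m+1}$; since a monomorphism of simplicial sets is a level-wise injection, precomposition with $Q$ preserves injectivity, so $\Tw^{\sset}$, and hence $\Tw$, preserves monomorphisms.

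With these two points in place the conclusion is formal. Monomorphisms of simplicial spaces are precisely the cofibrations of the Reedy model structure on $\sS$, which is a simplicial model category (see \cite{rezk2001css}). Hence, by the enrichment (pushout-product) axiom applied to the cofibration $\partial_{\Tw}F(2n+1)\hookrightarrow F(2n+1)$ and the fibrant object $W$, the map $\Map(F(2n+1),W)\to\Map(\partial_{\Tw}F(2n+1),W)$ is a Kan fibration. Under the isomorphisms $\Map(F(n),\Tw W)\cong\Map(F(2n+1),W)$ and $\Map(\partial F(n),\Tw W)\cong\Map(\partial_{\Tw}F(2n+1),W)$ recorded above, this is exactly the map $\Map(F(n),\Tw W)\to\Map(\partial F(n),\Tw W)$, so $\Tw W$ is Reedy fibrant.

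The main thing to get right, rather than a genuine obstacle, is the identification $\partial_{\Tw}F(2n+1)\cong\Tw(\partial F(n))$ together with the naturality of the mapping-space isomorphisms, so that the abstractly produced Kan fibration really is the Reedy fibrancy map for $\Tw W$ and not merely an isomorphic one; everything else is a direct consequence of $\Tw$ preserving colimits and monomorphisms and of $\sS$ being a simplicial model category. If one preferred to avoid invoking the enrichment axiom, the same statement follows by hand: a lifting problem for $\Map(F(2n+1),W)\to\Map(\partial_{\Tw}F(2n+1),W)$ against a horn inclusion $\Lambda^k[l]\hookrightarrow\Delta[l]$ transposes, via the tensoring, to a lifting problem against the pushout-product of $\partial_{\Tw}F(2n+1)\hookrightarrow F(2n+1)$ with that horn inclusion, which a Reedy fibrant $W$ solves.
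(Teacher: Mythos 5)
Your overall skeleton is the same as the paper's: reduce to showing that $\partial_{\Tw}F(2n+1)\to F(2n+1)$ is a cofibration (a level-wise injection) and then play it off against the fibrant object $W$ using the simplicial structure of the Reedy model structure. The divergence, and the problem, lies in how you establish the injectivity, and there is a genuine gap there: your two key claims are about two \emph{different} functors and are mutually incompatible. The functor $\Tw$ is, by the lemma you cite, precomposition with $Q$ in the first simplicial direction, i.e.\ $\Tw(W)_{n,l}=W_{2n+1,l}$. For this functor your mono-preservation argument is correct, but the identity $\Tw F(k)=F(2k+1)$ is false: $\Tw(F(k))_{m,l}=\Hom_{\DD}([2m+1],[k])$, whereas $F(2k+1)_{m,l}=\Hom_{\DD}([m],[2k+1])$; already for $k=0$ one gets $\Tw F(0)=F(0)\neq F(1)$ (the twisted arrow object of a point is a point). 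The functor that does send $F(k)$ to $F(2k+1)$, preserves colimits, and hence carries the coequalizer presentation of $\partial F(n)$ to the coequalizer defining $\partial_{\Tw}F(2n+1)$, is the \emph{left adjoint} $L=(Q\times\mathrm{id})_!$ of $\Tw$ (left Kan extension along $Q$), not $\Tw$ itself. The paper's own sentence ``$\Tw(F(n)\times\Delta[l])=F(2n+1)\times\Delta[l]$'' contains the same conflation, so you were misled; but note that wherever the paper uses that claim it is only in the form $\Map(A,\Tw W)\cong\Map(LA,W)$, which is the adjunction $L\dashv\Tw$ and is true, whereas your argument uses the object-level identification in an essential way.

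What your proof actually needs, then, is that $L$ carries the monomorphism $\partial F(n)\hookrightarrow F(n)$ to a monomorphism, and this cannot be had by abstract nonsense: left Kan extensions of this kind do not preserve monomorphisms in general (for example, left Kan extension along $\DD\to[0]$ is $\pi_0$, which sends the monomorphism $\partial\Delta[1]\hookrightarrow\Delta[1]$ to the non-injective map $\{0,1\}\to\ast$). Whether $L(\partial F(n))=\partial_{\Tw}F(2n+1)\to F(2n+1)$ is injective is exactly the question of whether the pairwise intersections of the images of the twisted face inclusions $F(Q(d^i))\colon F(2n-1)\to F(2n+1)$ are precisely the images of the copies of $F(2n-3)$, so that the coequalizer identifies no more than it should. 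This combinatorial fact is true, and it is exactly what the paper's proof verifies: that is the content of its pullback square of representables and the element chase showing that $\alpha\sim\beta$ forces a common preimage $\gamma\in F(2n-3)_k$. Your proof is missing precisely this step; to repair it, replace the appeal to ``$\Tw$ preserves monomorphisms'' by a verification that for $i\neq j$ the square comparing the two copies of $F(2n-1)$ over $F(2n+1)$ is a pullback, which is where the actual combinatorics of $Q$ must enter.
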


\begin{proof}
    Following the definition of Reedy fibration (\cref{subsec:reedy model}) and the explanation above, we only need to establish that the map 
    \[ 
    \Map(F(2n+1),Y) \to \Map(\partial_{\Tw} F(2n+1),Y) \times_{\Map(\partial_{\Tw} F(2n+1),X)} \Map(F(2n+1),X)
    \]
    is a Kan fibration. However, $Y \to X$ is a Reedy fibration and the Reedy model structure is simplicial and so we only need to show that $\partial_{\Tw} F(2n+1) \to F(2n+1)$ is a cofibration, which follows from \cref{lemma:reedy technical}.
\end{proof}

Applying the previous result to the unique map $W \to F(0)$, we obtain the following corollary.

\begin{corollary} \label{cor:reedy}
    If $W$ is a Reedy fibrant simplicial space, then $\Tw W$ is also Reedy fibrant.
\end{corollary}

\subsection{Segal Condition}
Next we want to show that the twisted arrow construction preserves the Segal condition  (\cref{subsec:css}). 

\begin{proposition}\label{prop:segal}
If $W$ is a Segal space, then $\Tw W$ is also a Segal space.
\end{proposition}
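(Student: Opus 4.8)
The plan is to use that $\Tw W$ is already Reedy fibrant by \cref{prop:reedy}, so it remains only to verify that for each $n\ge 2$ the $n$-th Segal map of $\Tw W$ is a Kan equivalence, and the first task is to reformulate this map. Since $\Tw$ is levelwise given by the restriction $\Tw^{\sset}=(-)\circ Q$ (the preceding lemma), it preserves limits and colimits; as $\sS$ is presentable it therefore admits a simplicial left adjoint $\tau$, and the adjunction isomorphism $\Hom_{\sS}(\tau F(n),W)\cong\Hom_{\sS}(F(n),\Tw W)\cong W_{2n+1,0}\cong\Hom_{\sS}(F(2n+1),W)$, valid for all $W$, forces $\tau F(n)\cong F(2n+1)$ by Yoneda. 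Writing $\mathrm{Sp}[m]\hookrightarrow F(m)$ for the spine inclusion $F(1)\cup_{F(0)}\cdots\cup_{F(0)}F(1)\hookrightarrow F(m)$, the $n$-th Segal map of $\Tw W$ is the map $\Map(F(n),\Tw W)\to\Map(\mathrm{Sp}[m],\Tw W)$ induced by $\mathrm{Sp}[n]\hookrightarrow F(n)$, which under the simplicial adjunction becomes the map $\Map(F(2n+1),W)\to\Map(T,W)$ induced by the inclusion $\tau(\mathrm{Sp}[n])=:T\hookrightarrow F(2n+1)$. Since $\tau$ preserves colimits, a direct computation identifies $T$ as the union of $n$ copies of $F(3)$, the $i$-th copy being the $3$-simplex $[1]^{op}\star[1]\hookrightarrow[n]^{op}\star[n]$ induced by the edge $\{i-1,i\}\hookrightarrow[n]$, with consecutive copies glued along a single common edge.

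The second step is a reduction by two-out-of-three. I would observe that $T$ contains the full spine $\mathrm{Sp}[2n+1]$ of $F(2n+1)$, since every consecutive edge of $[2n+1]=[n]^{op}\star[n]$ lies in one of the constituent $3$-simplices. Hence we have inclusions $\mathrm{Sp}[2n+1]\hookrightarrow T\hookrightarrow F(2n+1)$, and applying $\Map(-,W)$ factors the ordinary Segal map of $W$ at level $2n+1$ through $\Map(T,W)$. Because $W$ is a Segal space, that composite is a Kan equivalence, so by two-out-of-three it suffices to prove that $\Map(T,W)\to\Map(\mathrm{Sp}[2n+1],W)$ is a Kan equivalence; equivalently, that $\mathrm{Sp}[2n+1]\hookrightarrow T$ is a weak equivalence in the Segal space model structure on $\sS$, namely the left Bousfield localization of the Reedy model structure at the spine inclusions (\cite{rezk2001css}), whose fibrant objects are the Segal spaces and which is left proper.

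For the final step I would argue by induction on $n$, building $T$ as an iterated pushout. Writing $T_k$ for the union of the first $k$ copies of $F(3)$, the combinatorics above show that $T_{k+1}=T_k\cup_{F(e_k)}F(3)$ is a genuine pushout along the inclusion of a single edge $e_k$, and that the spines assemble compatibly as $\mathrm{Sp}(T_{k+1})=\mathrm{Sp}(T_k)\cup_{F(e_k)}\mathrm{Sp}[3]$, where $e_k$ is one of the spine edges of the new copy. The base case $\mathrm{Sp}[3]\hookrightarrow F(3)$ is a spine inclusion, hence a Segal equivalence by construction of the localization. For the inductive step I would apply the gluing lemma in the left proper Segal space model structure to the evident map of pushout spans, whose leg maps $\mathrm{Sp}(T_k)\hookrightarrow T_k$ (inductive hypothesis), $\mathrm{id}_{F(e_k)}$, and $\mathrm{Sp}[3]\hookrightarrow F(3)$ are all Segal equivalences; this yields that $\mathrm{Sp}(T_{k+1})\hookrightarrow T_{k+1}$ is a Segal equivalence, and taking $k=n$ gives $\mathrm{Sp}[2n+1]\hookrightarrow T$, completing the reduction and hence the proof. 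I expect the main obstacle to be the bookkeeping of the first step, namely pinning down $T=\tau(\mathrm{Sp}[n])$ and matching the face maps of $\Tw W$ with the edge-gluings of the constituent $3$-simplices, rather than the homotopical gluing argument, which is routine once the iterated-pushout description is in place.
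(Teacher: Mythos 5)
Your reduction steps are correct and take a genuinely different route from the paper: instead of the paper's induction on mapping spaces (which compares $W_{2n+1} \to W_{2n-1}\underset{W_1}{\times} W_3$ with the Segal maps of $W$ using right properness of the Kan model structure and $2$-out-of-$3$), you pass to the left adjoint $\tau$ of $\Tw$ and reduce everything to the combinatorial claim that $\mathrm{Sp}[2n+1]\hookrightarrow T=\tau(\mathrm{Sp}[n])$ is a Segal equivalence. Your identification of $T$ is also correct: the $i$-th copy of $F(3)$ sits on the vertices $\{n-i,n-i+1,n+i,n+i+1\}$ of $[2n+1]$, and copies $k$ and $k+1$ are glued along the single edge $e_k=\{n-k,\,n+k+1\}$. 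But exactly this formula exposes a genuine gap in your final induction: $e_k$ is indeed the middle spine edge $\{1,2\}$ of the \emph{new} copy $k+1$, but inside $T_k$ it is the \emph{long} edge $\{0,3\}$ of copy $k$, whose spine consists of $\{n-k,n-k+1\}$, $\{n-k+1,n+k\}$ and $\{n+k,n+k+1\}$. Consequently $F(e_k)$ is not contained in $\mathrm{Sp}(T_k)$, the left leg $F(e_k)\to\mathrm{Sp}(T_k)$ of your map of pushout spans does not exist, and the gluing lemma cannot be applied as stated. The claimed identity $\mathrm{Sp}(T_{k+1})=\mathrm{Sp}(T_k)\cup_{F(e_k)}\mathrm{Sp}[3]$ also fails on the level of objects: iterating it produces the union of the spines of all the copies, which contains the long edges $e_1,\dots,e_{n-1}$ of $[2n+1]$ and is therefore strictly larger than $\mathrm{Sp}[2n+1]$, so the induction would not even terminate at the spine.

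The repair stays entirely within your strategy: induct on $U_k:=\mathrm{Sp}[2n+1]\cup T_k$ (union of subobjects of $F(2n+1)$) rather than on a ``spine of $T_k$''. Then $U_0=\mathrm{Sp}[2n+1]$ and $U_n=T$, since $\mathrm{Sp}[2n+1]\subseteq T$. A short check shows that copy $k+1$ meets $U_k$ in exactly its own spine: for $k\ge 1$ its two outer spine edges $\{n-k-1,n-k\}$ and $\{n+k+1,n+k+2\}$ lie in $\mathrm{Sp}[2n+1]$ and its middle spine edge $e_k$ lies in $T_k$, while for $k=0$ all three spine edges of copy $1$ lie in $\mathrm{Sp}[2n+1]$; no long edge or higher simplex of copy $k+1$ lies in $U_k$. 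Hence each inclusion $U_k\hookrightarrow U_{k+1}$ is a pushout of the spine inclusion $\mathrm{Sp}[3]\hookrightarrow F(3)$, which is a trivial cofibration in the Segal space model structure, so the composite $\mathrm{Sp}[2n+1]\hookrightarrow T$ is a trivial cofibration as well; no gluing lemma and no left properness are needed. With this correction your proof is complete, and it isolates a stronger combinatorial fact than the paper's argument (the inclusion $\mathrm{Sp}[2n+1]\hookrightarrow\tau(\mathrm{Sp}[n])$ is a Segal trivial cofibration, reusable for any localization-style argument) at the cost of the adjoint bookkeeping, whereas the paper's proof never leaves the mapping-space side and uses only the Segal condition and right properness of the Kan model structure.
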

\begin{proof}
By \cref{cor:reedy}, $\Tw W$ is a Reedy fibrant simplicial space. Hence, we just need to show that $\Tw W_{n} \to \Tw W_{n-1} \times_{\Tw W_0} \Tw W_1$ is an equivalence, which unwinds to showing that $W_{2n+1} \to W_{2n-1} \times_{W_1} W_3$ is an equivalence. Now, the following equivalence of cospans

    \[
        \begin{tikzcd}
            W_{2n-1} \arrow[d, equal] \arrow[r] & W_1 \arrow[d, equal] & W_3 \arrow[d, "\simeq"] \arrow[l, "\Tw(d_0)"'] \\
            W_{2n-1} \arrow[r] & W_1 & W_1 \times_{W_0} W_1 \times_{W_0} W_1 \arrow[l, "\pi_1"] 
        \end{tikzcd}
    \]
    induces via pullback the Kan equivalence 
    \[W_{2n-1} \times_{W_1} W_3 \to W_{2n-1} \times_{W_1} W_1 \times_{W_0} W_1 \times_{W_0} W_1 \cong W_{2n-1} \times_{W_0} W_1 \times_{W_0} W_1.\]
    This means in the following commutative triangle
    \[
        \begin{tikzcd}[column sep=0in]
            W_{2n+1} \arrow[dr, "\simeq"'] \arrow[rr] & & W_{2n-1}  \underset{W_1}{\times} W_3 \arrow[dl,"\simeq"] \\
        & W_{2n-1}\underset{W_0}{\times}W_1\underset{W_0}{\times}W_1
        \end{tikzcd}
    \]
    the right-hand map is a weak equivalence. Moreover, the left-hand diagonal map is an equivalence by the Segal condition and so the desired result follows by $2$-out-of-$3$.
\end{proof}    

\begin{remark}
   The result also follows as a special case of \cite[Theorem 2.11]{boors2020twosegal} as every Segal space is in particular a $2$-Segal space. Indeed, our argument can be seen as a specific instance of their more general approach using edgewise subdivision.
\end{remark}

\subsection{Completeness Condition}
In this subsection we show that $\Tw$ also preserves the completeness condition, relying on the terminology in \cref{subsec:css}. We start with a lemma characterizing the equivalences in the twisted arrow Segal space. 

\begin{lemma}\label{lemma:twisted equiv}
If $W$ is a Segal space, then a morphism $\sigma\colon f \to g$
\begin{center}
    \begin{tikzcd}
    \bullet \arrow[d, swap, "f"] & \arrow[l, swap, "k"] \bullet\arrow[d, "g"]\\
    \bullet \arrow[r, "h"] & \bullet
    \end{tikzcd}
\end{center}
in the twisted arrow Segal space $\Tw W$ is a homotopy equivalence if and only if $k$ and $h$ are homotopy equivalences in $W$.
\end{lemma}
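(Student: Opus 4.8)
The plan is to reduce the statement to the corresponding elementary fact about twisted arrow categories of ordinary $1$-categories, transporting it across the equivalence $F_W$ of \cref{prop:equivalence}. Recall that a point of $W_1$ is a homotopy equivalence precisely when its image in the homotopy category $\Ho(W)$ is an isomorphism; applying this to the Segal space $\Tw W$, the morphism $\sigma \in W_{30}$ is a homotopy equivalence if and only if its class $[\sigma]$ is an isomorphism in $\Ho\Tw(W)$. Since $F_W\colon \Ho\Tw(W) \to \Tw\Ho(W)$ is an equivalence of categories, it is in particular conservative, so I would argue that it suffices to show that $F_W([\sigma])$ is an isomorphism in $\Tw\Ho(W)$. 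By the definition of $F_W$ this image is $F_W([\sigma]) = ([d_2d_2\sigma],[d_0d_0\sigma]) = ([h],[k])$.

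The key step is then the following observation about the twisted arrow category of an ordinary category: a morphism $(\bar{h},\bar{k})\colon f \to g$ of $\Tw\Ho(W)$, satisfying $g = \bar{h}\circ f \circ \bar{k}$, is an isomorphism whenever both legs $\bar{h}$ and $\bar{k}$ are isomorphisms in $\Ho(W)$. I would verify this directly by exhibiting $(\bar{h}^{-1},\bar{k}^{-1})\colon g \to f$ as a two-sided inverse, using that $\bar{h}^{-1}\circ g \circ \bar{k}^{-1} = f$ together with the composition rule of the twisted arrow category, which is contravariant in the first leg and covariant in the second. Now $[h]$ and $[k]$ are isomorphisms in $\Ho(W)$ precisely because $h$ and $k$ are homotopy equivalences in $W$, again by the characterization of homotopy equivalences in terms of the homotopy category. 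Hence $F_W([\sigma]) = ([h],[k])$ is an isomorphism, so $[\sigma]$ is an isomorphism by conservativity of $F_W$, and therefore $\sigma$ is a homotopy equivalence in $\Tw W$.

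The argument is largely a matter of assembling the various characterizations of homotopy equivalences correctly, and the only genuine content is the elementary verification that a morphism in a $1$-categorical twisted arrow category is invertible as soon as its two legs are; this is the one step I would write out in full, as the rest is formal. I note that the converse of this $1$-categorical fact also holds, so the same reasoning in fact yields the stronger statement that $\sigma$ is a homotopy equivalence if and only if $k$ and $h$ are, although only the stated direction is needed here.
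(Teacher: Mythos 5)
Your proof is correct and follows essentially the same route as the paper's: both reduce the statement to isomorphisms in the homotopy category (using Rezk's characterization of homotopy equivalences), transfer across the equivalence $F_W$ of \cref{prop:equivalence}, and conclude by the elementary fact that a morphism in the twisted arrow category of an ordinary category is invertible when both its legs are. The only difference is that you spell out this $1$-categorical verification explicitly, whereas the paper dismisses it with ``by definition of the twisted arrow category.''
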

\begin{proof}
If $\sigma$ is an equivalence then $h$ and $k$ are equivalences as the projection $\Tw W \to W^{op} \times W$ preserves equivalences. Conversely, assume $h$ and $k$ are equivalences. We want to prove that $\sigma$ is an equivalence. We will prove that $\sigma$ has a right inverse, as the case for the left inverse is analogous. Intuitively, we want to construct a map $\hat{\sigma}\colon F(5) \to W$ representing a diagram of the following form
\begin{center}
    \begin{tikzcd}
    \bullet \arrow[d, swap, "g"] & \arrow[l, "k^{-1}"] \bullet\arrow[d, "f"] & \bullet \arrow[d, "g"] \arrow[l, "k"] \arrow[ll, bend right=30, equal] \\
     \bullet \arrow[r, "h^{-1}"] \arrow[rr, bend right=30, equal] & \bullet \arrow[r, "h"] & \bullet  
    \end{tikzcd}.
\end{center}
By assumption, there is a map $\chi\colon F(2) \to W$, witnessing the fact that $hh^{-1}$ composes to the identity, and similarly a map $\kappa\colon F(2) \to W$ witnessing the fact that $k^{-1}k$ composes to the identity. 

Before we proceed, we need a technical construction. We define the simplicial space $\CoEq$ via the following pushout diagram
\[ 
\begin{tikzcd}[column sep=2cm]
G(3) \arrow[r, "\cong"]  \arrow[d, "\simeq"'] & \displaystyle F(1) \coprod_{F(0)} F(1) \coprod_{F(0)} F(1) \arrow[r, "{\langle 0,2 \rangle \coprod \langle 1,2 \rangle \coprod \langle 0,2 \rangle}"] & \displaystyle F(2) \coprod_{F(1)} F(3) \coprod_{F(1)} F(2) \arrow[d, "\simeq"] \arrow[ddr, bend left = 15, "{\langle 0,1,2 \rangle \coprod \langle 1,2,3,4 \rangle \coprod \langle 3,4,5 \rangle}" description]\\
F(3) \arrow[rr] \arrow[drrr, bend right = 10, "{\langle 0,2,3,5 \rangle}"'] & & \CoEq \arrow[ul, phantom, "\ulcorner", very near start] \arrow[dr, dashed]\\ 
& & & F(5) 
\end{tikzcd}.
\]
The left-hand map in the square is a Segal space equivalence by definition, hence the right-hand map in the square is one as well. Moreover, the map $G(5) \to F(5)$ factors through $i\colon G(5) \to F(2) \coprod_{F(1)} F(3) \coprod_{F(1)} F(2)$. This map $i$ is given as the colimit of the following diagram of Segal space equivalences
\[
 \begin{tikzcd}
 G(2) \arrow[d, "\simeq"] & F(1) \arrow[d, equal] \arrow[l] \arrow[r] & G(3) \arrow[d, "\simeq"] & F(1) \arrow[d, equal] \arrow[l] \arrow[r] & G(2) \arrow[d, "\simeq"]\\ 
 F(2) & F(1) \arrow[l] \arrow[r] & F(3) & F(1) \arrow[l] \arrow[r] & F(2) 
 \end{tikzcd} 
\]
and is hence a Segal space equivalence as well. Hence, we have the following commutative diagram.
\[
\begin{tikzcd}
 \displaystyle F(2) \coprod_{F(1)} F(3) \coprod_{F(1)} F(2) \arrow[d, "\simeq"] & G(5) \arrow[d, "\simeq"] \arrow[l, "\simeq", "i"']   \\ 
  \CoEq \arrow[r] & F(5) 
\end{tikzcd},
\]
hence, by $2$-out-of-$3$, the map $\CoEq \to F(5)$, which is the dashed map in the diagram above, is a Segal space equivalence.

We now complete the proof. Let $\id_g\colon F(3) \to W$ be the map representing the composition $(\id, g, \id)$. By assumption, the maps $\chi\colon F(2) \to W, \sigma\colon F(3) \to W, \kappa\colon F(2) \to W$ assemble into a map 
\[(\chi,\sigma,\kappa)\colon F(2) \coprod_{F(1)} F(3) \coprod_{F(1)} F(2) \to W \]
Moreover, the restriction via $\langle 0,2 \rangle \coprod \langle 1,2 \rangle \coprod \langle 0,2 \rangle$, is precisely $(\id, g, \id)\colon G(3) \to W$. Hence, by the universal property of the pushout, we have a map $(\chi, \sigma, \kappa, \id_g)\colon \CoEq \to W$. As $W$ is a Segal space, we can lift this to a map $\hat{\sigma}\colon F(5) \to W$, which by the lifting assumptions is a right inverse for $\sigma$, as it composes to the identity $\id_g$.
\end{proof}

\begin{proposition}\label{prop:twisted equiv pullback}
Let $W$ be a Segal space. Then we have the following strict and homotopy pullback square
\begin{center}
    \begin{tikzcd}
    \Tw(W)_{hoequiv}\arrow[dr, phantom, "\lrcorner", very near start]\arrow[r, hook]\arrow[d] & \Tw(W)_1\arrow[d]\\
    W_{hoequiv}^{op}\times W_{hoequiv}\arrow[r, hook] & W_1^{op}\times W_1
    \end{tikzcd}.
\end{center}
\end{proposition}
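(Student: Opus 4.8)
The plan is to reduce the entire statement to the level of path-components, where it becomes precisely the content of \cref{lemma:twisted equiv}. The key point is that each of the two horizontal inclusions in the square is an inclusion of a union of path-components. Since $W$ is Reedy fibrant, the simplicial sets $W_1$ and $\Tw(W)_1 \cong W_3$ are Kan complexes. By definition $W_{hoequiv}$ is a union of path-components of $W_1$, and as equivalences in $W^{op}$ agree with equivalences in $W$ we have $W_{hoequiv}^{op} = W_{hoequiv}$ as subspaces of $W_1^{op} = W_1$; using $\pi_0(X \times Y) \cong \pi_0 X \times \pi_0 Y$, the product $W_{hoequiv}^{op} \times W_{hoequiv}$ is therefore a union of path-components of $W_1^{op} \times W_1$.

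Next I would form the pullback $P := (W_{hoequiv}^{op} \times W_{hoequiv}) \times_{W_1^{op} \times W_1} \Tw(W)_1$ along the twisted arrow projection and check that it is again a union of path-components of $\Tw(W)_1$. Writing a union of path-components as the preimage of a subset of $\pi_0$ under the canonical map to the discrete simplicial set $\pi_0$, this follows because any map to a discrete simplicial set factors through $\pi_0$, so the preimage of such a subspace under any map of simplicial sets is again of this form. Concretely, recalling that the projection sends $\sigma \in \Tw(W)_{10} = W_{30}$ to the pair $(k,h)$ with $k = d_0d_0\sigma$ and $h = d_2d_2\sigma$, the subspace $P$ consists of exactly those path-components $[\sigma] \in \pi_0 \Tw(W)_1$ for which $k$ and $h$ are homotopy equivalences in $W$.

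It then remains to identify $P$ with $\Tw(W)_{hoequiv}$. Since both are unions of path-components of the Kan complex $\Tw(W)_1$, they coincide as soon as they determine the same subset of $\pi_0 \Tw(W)_1$, i.e. the same set of homotopy classes of morphisms in $\Tw W$. By definition $\Tw(W)_{hoequiv}$ selects the classes $[\sigma]$ that are homotopy equivalences in $\Tw W$, while $P$ selects the classes $[\sigma]$ for which $k$ and $h$ are homotopy equivalences in $W$. These two conditions are exactly equated by \cref{lemma:twisted equiv}, so $P = \Tw(W)_{hoequiv}$ and the square is a pullback.

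The only genuine subtlety, and hence the step I expect to be the main obstacle, is the justification that $P$ is a union of path-components, so that equality of subspaces of $\Tw(W)_1$ may be detected on $\pi_0$. This rests on Reedy fibrancy, which guarantees that we are manipulating Kan complexes with well-behaved path-components, together with the definitional description of $W_{hoequiv}$ as a union of path-components. Once this reduction is secured, all of the homotopical content is already packaged inside \cref{lemma:twisted equiv}.
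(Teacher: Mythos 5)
Your proposal is correct and follows essentially the same route as the paper's proof: both reduce the statement to an identification of path-components of the Kan complex $\Tw(W)_1$, observe that the pullback and $\Tw(W)_{hoequiv}$ are each unions of path-components (you justify this via factorization through $\pi_0$, the paper by stability of such inclusions under pullback), and then conclude by matching the two subsets of $\pi_0\Tw(W)_1$ using \cref{lemma:twisted equiv}. Your write-up is in fact slightly more explicit than the paper's about why the pullback is again a union of path-components, but the mathematical content is the same.
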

\begin{proof}
By definition, the right-hand map is isomorphic to the map 
\[\Map(F(3),W) \to \Map(F(1) \coprod F(1),W),\] 
which means it is a Reedy fibration. Hence, if we prove it is a strict pullback, it is also a homotopy pullback.

Now, by definition of pullbacks, the strict pullback is given by the collection of morphisms $\sigma\colon f \to g$ in $\Tw(W)$, such that their image $(k,h)$ in $W_1^{op} \times W_1$ lies in $W_{hoequiv}^{op} \times W_{hoequiv}$. By \cref{lemma:twisted equiv}, that is precisely $\Tw(W)_{hoequiv}$. Hence, we are done.
\end{proof}

\begin{lemma} \label{lemma:twisted equiv pullback}
    Let $W$ be a Segal space. The square 
    \begin{center}
        \begin{tikzcd}
        \Tw(W)_0\arrow[dr, phantom, "\lrcorner", very near start]\arrow[r]\arrow[d] & \Tw(W)_1\arrow[d]\\
        W_0^{op}\times W_0\arrow[r, "\simeq"]&W_1^{op}\times W_1
        \end{tikzcd}.
    \end{center}
    is a homotopy pullback square.
\end{lemma}

\begin{proof}
    We need to show the induced map 
    \[
    \Tw(W)_0 \to \Tw(W)_1 \times_{(W_1^{op}\times W_1)} (W_0^{op}\times W_0)
    \]
    is an equivalence, which unwinds to the map 
    \[
    \Map(F(1),W) \to \Map(F(3),W) \times_{\Map(F(1)\coprod F(1),W)} \Map(F(0) \coprod F(0),W).
    \]
    As $W$ is an arbitrary Segal space, it hence suffices to prove that the induced map 
    \[
         F(3) \coprod_{(F(1) \coprod F(1))} (F(0) \coprod F(0))  \to F(1)
    \] 
    is a Segal space equivalence. Precomposing the map with the equivalence $G(3) \to F(3)$ we get
    \[
    F(1) \cong G(3) \coprod_{(F(1) \coprod F(1))} (F(0) \coprod F(0)) \xrightarrow{ \ \simeq \ } F(3) \coprod_{(F(1) \coprod F(1))} (F(0) \coprod F(0)) \to F(1).
    \]
    The first map is a Segal space equivalence, as it is the result of a pushout of the following span of Segal space equivalences
    \[
    \begin{tikzcd}
    F(0) \coprod F(0) \arrow[d, equal] & F(1) \coprod F(1) \arrow[l] \arrow[d, equal] \arrow[r] & G(3) \arrow[d, "\simeq"] \\
    F(0) \coprod F(0) & F(1) \coprod F(1) \arrow[l] \arrow[r] & F(3). 
    \end{tikzcd}.
    \]
    The composition is the identity and hence also an equivalence. Hence, by $2$-out-of-$3$, the right-hand map is also an equivalence.
\end{proof}

We are finally ready to prove the main result of this section.
\begin{theorem}\label{thm:completeness}
If $W$ is a complete Segal space, then $\Tw W$ is a complete Segal space.
\end{theorem}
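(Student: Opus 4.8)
The plan is to leverage \cref{prop:segal}, which already guarantees that $\Tw W$ is a Segal space, so that the only remaining condition to verify is completeness, namely that the degeneracy map $\Tw(W)_0 \to \Tw(W)_{hoequiv}$ into the equivalences is a Kan equivalence. The strategy is to realize both $\Tw(W)_0$ and $\Tw(W)_{hoequiv}$ as homotopy pullbacks over $W_1^{op} \times W_1$ of the twisted arrow projection $\Tw(W)_1 \to W_1^{op} \times W_1$, and then to compare them using the completeness of $W$. Concretely, I would study the diagram
\begin{center}
\begin{tikzcd}
\Tw(W)_0 \arrow[r] \arrow[d] & \Tw(W)_{hoequiv} \arrow[r, hook] \arrow[d] & \Tw(W)_1 \arrow[d] \\
W_0^{op} \times W_0 \arrow[r, "s_0 \times s_0"] & W_{hoequiv}^{op} \times W_{hoequiv} \arrow[r, hook] & W_1^{op} \times W_1
\end{tikzcd}
\end{center}
in which the right-hand square is the pullback of \cref{prop:twisted equiv pullback} (a homotopy pullback, being a strict pullback along an inclusion of path components), the lower horizontal maps are induced by sending an object to its identity, and the upper left map is the completeness map of $\Tw W$. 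The diagram commutes by construction, since the degeneracy $\Tw(W)_0 \to \Tw(W)_1$ has both of its outer legs equal to identities.

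The crux, and the step I expect to be the main obstacle, is to show that the outer rectangle is a homotopy pullback, equivalently that $\Tw(W)_0$ is the homotopy pullback of $s_0 \times s_0 \colon W_0^{op} \times W_0 \to W_1^{op} \times W_1$ along the twisted arrow projection. Under the isomorphisms $\Tw(W)_0 \cong W_1$ and $\Tw(W)_1 \cong W_3$ this asserts that $W_1$ is the homotopy pullback of $s_0 \times s_0$ against the outer-edge projection $(d_2 d_2, d_0 d_0) \colon W_3 \to W_1 \times W_1$. To establish it I would use the Segal condition to replace $W_3$ by $W_1 \underset{W_0}{\times} W_1 \underset{W_0}{\times} W_1$, under which the outer-edge projection becomes the projection onto the first and last factors. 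Forcing these two factors to be homotopic to degeneracies then pins down the source and target of the remaining middle factor up to contractible choice; here one uses that $s_0 \colon W_0 \to W_1$ is a split monomorphism and that based path spaces are contractible. Hence the homotopy pullback is equivalent to the space $W_1$ of middle factors, with the natural comparison map induced by the degeneracy. Note that this step uses only the Segal condition, and not completeness.

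With the outer rectangle established as a homotopy pullback, the pasting lemma for homotopy pullbacks, together with the right-hand square, shows that the left-hand square is also a homotopy pullback. Finally, completeness of $W$ gives that $W_0 \to W_{hoequiv}$ is a Kan equivalence, hence so is the lower left map $s_0 \times s_0 \colon W_0^{op} \times W_0 \to W_{hoequiv}^{op} \times W_{hoequiv}$; since the top edge of a homotopy pullback square is an equivalence whenever the bottom edge is, the completeness map $\Tw(W)_0 \to \Tw(W)_{hoequiv}$ is a Kan equivalence. Combined with \cref{prop:segal}, this proves that $\Tw W$ is a complete Segal space.
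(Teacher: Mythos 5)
Your proof follows essentially the same route as the paper's: reduce to the completeness condition via \cref{prop:segal}, form the same two-square diagram over $W_0^{op}\times W_0 \to W_{hoequiv}^{op}\times W_{hoequiv} \hookrightarrow W_1^{op}\times W_1$, use \cref{prop:twisted equiv pullback} for the right square, paste, and transfer the equivalence $W_0 \to W_{hoequiv}$ along the bottom to the top of the left square. The one place you go beyond the paper is in explicitly proving that the outer rectangle is a homotopy pullback (via the Segal equivalence $W_3 \simeq W_1 \times_{W_0} W_1 \times_{W_0} W_1$ and the resulting identification of the fiber with the space of middle edges); the paper's appeal to ``the pasting property of pullbacks'' tacitly assumes this, so your added justification is correct and fills in the step the paper leaves implicit.
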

\begin{proof}
By \cref{prop:segal} we only need to check the completeness condition. We now have the following diagram. 
\begin{center}
    \begin{tikzcd}
    \Tw(W)_0\arrow[dr, phantom, "\lrcorner", very near start]\arrow[r]\arrow[d]&\Tw(W)_{hoequiv}\arrow[dr, phantom, "\lrcorner", very near start]\arrow[r, hook]\arrow[d]\arrow[d]&\Tw(W)_1\arrow[d]\\
    W_0^{op}\times W_0\arrow[r, "\simeq"]&W_{hoequiv}^{op}\times W_{hoequiv}\arrow[r, hook]&W_1^{op}\times W_1
    \end{tikzcd}.
\end{center}
By \cref{prop:twisted equiv pullback}, the right-hand square is a strict pullback square and a homotopy pullback square. Moreover, by \cref{lemma:twisted equiv pullback}, the whole rectangle is a homotopy pullback square. Hence, the left-hand square is a homotopy pullback square.

Since $W$ is a complete Segal space, $W_0^{op}\times W_0\xrightarrow{\simeq}W_{hoequiv}^{op}\times W_{hoequiv}$ is an equivalence of spaces, which implies that $\Tw(W)_0\xrightarrow{\simeq}\Tw(W)_{hoequiv}$ is an equivalence of spaces.
\end{proof}

\section{The Twisted Arrow Construction as a Left Fibration}
In this final section we demonstrate that if $W$ is a Segal space, then the twisted arrow projection $\Tw(W) \to W^{op} \times W$ is a left fibration (\cref{subsec:left fibrations}). First we need some technical lemmas.
\begin{lemma}\label{lemma:reedy fib}
If $W$ is a Segal space, then $\Tw W\to W^{op}\times W$ is a Reedy fibration.
\end{lemma}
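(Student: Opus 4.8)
The plan is to reduce the Reedy fibration condition to a matching-object lifting problem and then exploit the explicit combinatorial description of $\Tw$ together with the Reedy fibrancy of $W$.

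Let me think about what a Reedy fibration means here. The map $\Tw W \to W^{op} \times W$ is a Reedy fibration if for every $n \geq 0$ the relative matching map
$$
\Map(F(n), \Tw W) \longrightarrow \Map(\partial F(n), \Tw W) \underset{\Map(\partial F(n), W^{op}\times W)}{\times} \Map(F(n), W^{op}\times W)
$$
is a Kan fibration. I want to recast this using the identifications already established. By the discussion preceding Proposition \ref{prop:reedy}, $\Map(\partial F(n), \Tw W) \cong \Map(\partial_{\Tw}F(2n+1), W)$, and $\Map(F(n), \Tw W) \cong \Map(F(2n+1), W) \cong W_{2n+1}$. On the base, the twisted arrow projection is induced by the two inclusions $[n], [n]^{op} \hookrightarrow [n]^{op}\star[n]$, so at the level of $F(\bullet)$ it corresponds to the inclusion $F(n) \sqcup F(n) \to F(2n+1)$ picking out the last $n+1$ vertices and the first $n+1$ vertices (in reverse). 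Thus everything on both sides is a mapping space out of a simplicial space into the single Reedy fibrant object $W$.

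The key reduction is this: because $W$ is Reedy fibrant and the Reedy model structure on $\sS$ is simplicial, the map $\Map(A, W) \to \Map(B, W)$ is a Kan fibration whenever $B \hookrightarrow A$ is a monomorphism (cofibration) of simplicial spaces. Hence I would recast the relative matching map above as a map $\Map(F(2n+1), W) \to \Map(P_n, W)$, where $P_n$ is the pushout
$$
P_n = \partial_{\Tw}F(2n+1) \underset{\partial F(n)^{op} \sqcup \partial F(n)}{\coprod} \bigl(F(n)^{op} \sqcup F(n)\bigr)
$$
encoding the fiber product in the target. It then suffices to show that the canonical map $P_n \to F(2n+1)$ is a monomorphism of simplicial spaces, i.e. injective in each bidegree. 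This is the step I expect to be the main obstacle: one must check that gluing the two ``boundary'' copies $F(n)^{op}\sqcup F(n)$ along the projection-induced maps does not create new non-degenerate coincidences, so that $P_n$ genuinely injects into $F(2n+1)$. I would verify this combinatorially, exactly as in the proof of Proposition \ref{prop:reedy}, by analyzing when two cells in the pushout map to the same cell of $F(2n+1)$ and using the pullback/coequalizer presentation to produce a common preimage, forcing them to already be identified in $P_n$.

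Finally, once $P_n \hookrightarrow F(2n+1)$ is established as a cofibration, the simplicial Reedy model structure immediately yields that $\Map(F(2n+1), W) \to \Map(P_n, W)$ is a Kan fibration, which is precisely the relative matching map for $\Tw W \to W^{op}\times W$. Since this holds for all $n$, the projection is a Reedy fibration and we are done. I would remark that the Segal hypothesis is not actually needed for this lemma — only Reedy fibrancy of $W$ — but it is stated for uniformity with the surrounding results.
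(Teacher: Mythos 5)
Your proposal is correct and takes essentially the same route as the paper: both reduce the relative matching map to a map of the form $\Map(F(2n+1),W)\to\Map(P_n,W)$ for the same pushout $P_n=\partial_{\Tw}F(2n+1)\coprod_{\partial F(n)\sqcup\partial F(n)}\bigl(F(n)\sqcup F(n)\bigr)$, show that $P_n\to F(2n+1)$ is a levelwise injection by the combinatorial argument of \cref{prop:reedy}, and conclude using Reedy fibrancy of $W$ together with the simplicial structure of the Reedy model structure. Your closing remark is also accurate: the paper's proof likewise uses only Reedy fibrancy of $W$, never the Segal condition.
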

\begin{proof}
We need to show that the map 
$$\Map(F(k),\Tw W) \to (W_k^{op}\times W_k)\underset{\Map(\partial F(k), W^{op}\times W)}{\times} \Map(\partial F(k), \Tw W)$$
is a Kan fibration.

By construction the left-hand side is given by $\Map(F(2k+1),W)$. Moreover, by the observation in the beginning of \cref{subsec:reedy}, we have $\Map(\partial F(k), \Tw W) \cong \Map(\partial_{\Tw} F(2k+1),W)$ and so the right-hand side simplifies to 
$$\Map(\partial_{\Tw} F(2k+1) \coprod_{(\partial F(k) \coprod \partial F(k))} (F(k) \coprod F(k)) ,W).$$

As $W$ is Reedy fibrant, it hence suffices to show that 
\[\partial_{\Tw} F(2k+1) \coprod_{(\partial F(k) \coprod \partial F(k))} (F(k) \coprod F(k)) \to F(2k+1)\]
is a cofibration or, in other words, a level-wise injection of sets. By \cref{lemma:sets}, this follows from the fact that the following commutative square 
\[
\begin{tikzcd}
 \partial F(k)_n \coprod \partial F(k)_n \arrow[dr, phantom, "\lrcorner", very near start] \arrow[r] \arrow[d] & \partial_{\Tw} F(2k+1)_n  \arrow[d]  \\ 
 F(k)_n \coprod F(k)_n \arrow[r] & F(2k+1)_n
\end{tikzcd}
\]
is a pullback, for every $n \geq 0$, which is immediate from the definition of the injections. Hence, we are done.
\end{proof}

\begin{lemma}\label{lemma:left fib one}
If $W$ is a Segal space, then the following diagram is a homotopy pullback square,
\begin{center}
    \begin{tikzcd}
    \Tw(W)_1\arrow[dr, phantom, "\lrcorner", very near start]\arrow[r]\arrow[d] & \Tw(W)_0\arrow[d]\\
    W_1^{op}\times W_1\arrow[r] & W_0^{op}\times W_0
    \end{tikzcd}.
\end{center}
\end{lemma}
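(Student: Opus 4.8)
The plan is to unwind all four maps explicitly via the isomorphism $\Tw(W)_n \cong W_{2n+1}$ and then recognize the resulting comparison map as the Segal map for $W_3$. First I would record the objects: $\Tw(W)_1 \cong W_3$ and $\Tw(W)_0 \cong W_1$, while $W_1^{op}\times W_1$ and $W_0^{op}\times W_0$ are $W_1\times W_1$ and $W_0\times W_0$ as spaces (the superscript $op$ only reverses the simplicial direction, not the individual spaces). Writing a point $\sigma\in W_3$ through its spine $(h_{01},h_{12},h_{23})$ of composable edges, a direct computation with the functor $Q$ shows that the left vertical twisted projection sends $\sigma$ to the outer pair $(h_{01},h_{23})$, the top map $\Tw(W)_1\to\Tw(W)_0$ is the face map $d_1$ selecting the middle edge $h_{12}$ (it is the initial-vertex restriction $\{0\}\hookrightarrow[1]$), the right vertical map sends an object $f\in W_1$ to its pair of endpoints $(\mathrm{src}\,f,\mathrm{tgt}\,f)$, and the bottom map sends $(a,b)$ to $(\mathrm{tgt}\,a,\mathrm{src}\,b)$. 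Since $W$ is Reedy fibrant, the face maps $W_1\to W_0$ are Kan fibrations, so strict pullbacks over $W_0$ already compute homotopy pullbacks.

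Next I would compute the homotopy pullback $P$ of the cospan $W_1\times W_1 \to W_0\times W_0 \leftarrow W_1$ assembled from the bottom and right maps. Unwinding the two fibre-product conditions, a point of $P$ consists of edges $a,b$ together with an object $f$ for which $\mathrm{tgt}\,a\simeq\mathrm{src}\,f$ and $\mathrm{tgt}\,f\simeq\mathrm{src}\,b$; reordering the data as $(a,f,b)$ exhibits $P$ as the space of homotopy-composable triples, that is
\[ P \simeq W_1 \underset{W_0}{\times} W_1 \underset{W_0}{\times} W_1. \]

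Finally, the canonical comparison map $W_3\to P$ induced by the square sends $\sigma$ to $\bigl((h_{01},h_{23}),h_{12}\bigr)$, which under the reordering above is exactly the spine, i.e. the Segal map
\[ W_3 \stackrel{\simeq}{\longrightarrow} W_1\underset{W_0}{\times}W_1\underset{W_0}{\times}W_1. \]
As $W$ is a Segal space this map is a weak equivalence, and therefore the original square is a homotopy pullback.

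The only genuine work is the bookkeeping in the first step: correctly pinning down which face maps appear by tracing through $Q$, in particular that the top map is $d_1$ rather than $d_0$ and the effect of $op$ on the two factors of $W^{op}\times W$. This identification is precisely what makes the Segal condition applicable, since taking $d_0$ instead would replace the middle edge $h_{12}$ by the long edge $h_{03}$ and destroy the composability constraint. Once the maps are matched correctly, the Segal condition supplies the equivalence with no further effort.
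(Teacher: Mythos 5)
Your proof is correct and follows essentially the same route as the paper's: identify the strict pullback of the cospan with $W_1\underset{W_0}{\times}W_1\underset{W_0}{\times}W_1$ (which computes the homotopy pullback since, by Reedy fibrancy, $W_1\to W_0\times W_0$ is a Kan fibration), and recognize the comparison map $\Tw(W)_1=W_3\to W_1\underset{W_0}{\times}W_1\underset{W_0}{\times}W_1$ as the Segal map. The paper's two-line proof is just a terser version of yours, leaving implicit the bookkeeping you spell out (which face map is the initial-vertex map, the effect of $op$, and the reordering of the triple).
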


\begin{proof}
By definition $\Tw(W)_0 = W_1$, and so the strict pullback is given by $\Tw(W)_1\simeq W_1\underset{W_0}{\times}W_1\underset{W_0}{\times}W_1$. The desired equivalence $\Tw(W)_1=W_3\simeq W_1\underset{W_0}{\times}W_1\underset{W_0}{\times}W_1$ now follows from the Segal condition.
\end{proof}

\begin{theorem}\label{thm:left fib}
If $W$ is a Segal space, then $\Tw W\to W^{op}\times W$ is a left fibration. 
\end{theorem}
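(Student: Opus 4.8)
The plan is to reduce the statement to the two technical lemmas just established together with a reduction to level one. Recall (see e.g.\ \cite{rasekh2017yoneda}) that a map $p\colon X \to Y$ of simplicial spaces is a \emph{left fibration} if it is a Reedy fibration and, for every $n \geq 1$, the square
\begin{center}
\begin{tikzcd}
X_n \arrow[r] \arrow[d] & X_0 \arrow[d] \\
Y_n \arrow[r] & Y_0
\end{tikzcd}
\end{center}
induced by the initial-vertex inclusion $\{0\} \hookrightarrow [n]$ (the same vertex appearing in the horizontal maps of \cref{lemma:left fib one}) is a homotopy pullback. Taking $X = \Tw W$ and $Y = W^{op}\times W$, the Reedy fibrancy is exactly \cref{lemma:reedy fib}, so the only remaining task is to verify the homotopy pullback condition for all $n$. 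I would first record that both $\Tw W$ and $W^{op}\times W$ are Segal spaces: the former by \cref{prop:segal}, and the latter because the opposite of a Segal space is again a Segal space and Segal spaces are closed under products. This makes the Segal decompositions available on both source and target, which is what powers the induction below.

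The base case $n=1$ is precisely \cref{lemma:left fib one}, so I would proceed by induction on $n$ using the pasting lemma for homotopy pullbacks. Applying the Segal condition to both rows, I would write $\Tw(W)_n \simeq \Tw(W)_1 \underset{\Tw(W)_0}{\times} \Tw(W)_{n-1}$ and likewise for $W^{op}\times W$, arranging the decomposition so that the initial vertex of level $n$ lies in the first edge; the level-$n$ square then appears as the outer rectangle of a horizontal composite whose right-hand square is the level-$1$ square. Since the latter is a homotopy pullback by the base case, the pasting lemma reduces the problem to showing that the left-hand square
\begin{center}
\begin{tikzcd}[column sep=large]
\Tw(W)_1 \underset{\Tw(W)_0}{\times} \Tw(W)_{n-1} \arrow[r] \arrow[d] & \Tw(W)_1 \arrow[d] \\
(W^{op}\times W)_1 \underset{(W^{op}\times W)_0}{\times} (W^{op}\times W)_{n-1} \arrow[r] & (W^{op}\times W)_1
\end{tikzcd}
\end{center}
is a homotopy pullback. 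Comparing horizontal homotopy fibers over a point of $\Tw(W)_1$, this collapses to the comparison of fibers of the two initial-vertex maps $\Tw(W)_{n-1} \to \Tw(W)_0$ and $(W^{op}\times W)_{n-1} \to (W^{op}\times W)_0$, i.e.\ exactly to the level-$(n-1)$ square being a homotopy pullback, which is the inductive hypothesis. This completes the induction and hence shows $p$ is a left fibration.

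The step I expect to be most delicate is the bookkeeping in the inductive decomposition: one must choose the Segal decomposition so that the initial vertex of level $n$ sits inside the first factor and the glued face map is the target map, and then check that the fiber comparison for the left-hand square genuinely reduces to the level-$(n-1)$ square rather than to a square built from some other vertex. The one input needed here beyond formal nonsense is that the relevant face map $\Tw(W)_1 \to \Tw(W)_0$ is split surjective via the degeneracy, so that ranging over points of $\Tw(W)_1$ suffices to detect the equivalence on all fibers. Everything else is a formal application of the pasting lemma together with the Segal decompositions of \cref{prop:segal}. Alternatively, one could avoid the induction altogether by citing the characterization of left fibrations over a Segal space directly, in which case \cref{lemma:reedy fib} and \cref{lemma:left fib one} already supply precisely its two hypotheses.
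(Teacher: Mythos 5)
Your proof is correct, and its skeleton coincides with the paper's: Reedy fibrancy via \cref{lemma:reedy fib} and the level-one homotopy pullback via \cref{lemma:left fib one}. The difference lies in how the reduction from all $n$ to $n=1$ is handled. The paper simply cites \cite[Lemma 3.29]{rasekh2017yoneda}, which says that for a Reedy fibration over a Segal space it suffices to check the homotopy pullback condition at $n=1$ --- this is precisely the ``alternative'' you mention in your final sentence, so your closing remark is in fact the paper's entire proof. You instead prove this reduction by hand: induction on $n$, writing the level-$n$ square as a pasting of the level-$1$ square with a square whose horizontal homotopy fibers are those of the level-$(n-1)$ initial-vertex maps. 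This argument is sound, granting the standard facts that the Segal maps commute with the vertex maps and that a square of Kan fibrations is a homotopy pullback if and only if the induced maps on fibers over all points of the upper-right corner are equivalences; note that it uses the Segal condition on both source and target, i.e.\ \cref{prop:segal} together with the (easy) closure of Segal spaces under opposites and products, whereas the cited lemma only needs the base to be Segal. What your route buys is self-containedness, arguably in the spirit of the paper's stated goals, at the cost of length. One small correction: the split surjectivity of $\Tw(W)_1 \to \Tw(W)_0$ that you flag as the delicate input is not needed in the direction your induction actually runs --- the inductive hypothesis gives fiber equivalences over \emph{all} points of $\Tw(W)_0$, and you only need them over points in the image of $\Tw(W)_1$; surjectivity would only matter for the converse implication, which the induction never uses.
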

\begin{proof}
By \cref{lemma:reedy fib}, the map is a Reedy fibration and so it suffices to check the appropriate homotopy pullbacks. As $W$ is a Segal space, by the explanation in \cref{subsec:left fibrations}, it is sufficient to check the homotopy pullback condition for $n=1$, which we have done in \cref{lemma:left fib one}.
\end{proof}

\begin{remark}
    In a sufficiently advanced theory of left fibrations, one can prove that a left fibration over a (complete) Segal space is itself a (complete) Segal space \cite[Lemma 3.38]{rasekh2023yoneda}. From this perspective, \cref{thm:left fib} could provide an alternative proof that the twisted arrow construction preserves complete Segal spaces. In fact, the approach of Martini in \cite[Proposition 4.2.5]{martini2021yoneda} precisely uses this proof strategy by first developing an advanced theory of left fibrations in the context of internal $\infty$-categories.
\end{remark}

\begin{remark} \label{rem:parametrized slice}
    As left fibrations are pullback stable, \cref{thm:left fib} implies that for a given object $x$ in a Segal space $W$, the pullback of $\Tw W \to W^{op} \times W$ along $\{x\} \times W \hookrightarrow W^{op} \times W$ is a left fibration $\Tw(W)_x \to W$, such that $(\Tw(W)_x)_0 = W_1 \underset{W_0}{\times} \Delta[0]$. This means $\Tw(W)_x \to W$ simply recovers the under-Segal space left fibration $W_{x/} \to W$ as described in \cite[Definition 3.41]{rasekh2023yoneda}, and so \cref{thm:left fib} is a direct generalization of \cite[Theorem 3.44]{rasekh2023yoneda}. 
\end{remark}

\bibliographystyle{alpha}
\bibliography{main.bib}

@book{reedy1974modelstructure,
    AUTHOR = {Reedy, Christopher Leonard},
     TITLE = {Homology of Algebraic Theories},
      NOTE = {Thesis (Ph.D.)--University of California, San Diego},
 PUBLISHER = {ProQuest LLC, Ann Arbor, MI},
      YEAR = {1974},
     PAGES = {52},
   MRCLASS = {Thesis},
  MRNUMBER = {2624278},
       URL =
              {http://gateway.proquest.com/openurl?url_ver=Z39.88-2004&rft_val_fmt=info:ofi/fmt:kev:mtx:dissertation&res_dat=xri:pqdiss&rft_dat=xri:pqdiss:7423973},
}

@article {rezk2001css,
    AUTHOR = {Rezk, Charles},
     TITLE = {A model for the homotopy theory of homotopy theory},
   JOURNAL = {Trans. Amer. Math. Soc.},
  FJOURNAL = {Transactions of the American Mathematical Society},
    VOLUME = {353},
      YEAR = {2001},
    NUMBER = {3},
     PAGES = {973--1007},
      ISSN = {0002-9947},
   MRCLASS = {55U35 (18G30)},
  MRNUMBER = {1804411},
MRREVIEWER = {Brooke E. Shipley},
       DOI = {10.1090/S0002-9947-00-02653-2},
       URL = {https://doi.org/10.1090/S0002-9947-00-02653-2},
}

@incollection {joyaltierney2007segalvqcat,
    AUTHOR = {Joyal, Andr\'{e} and Tierney, Myles},
     TITLE = {Quasi-categories vs {S}egal spaces},
 BOOKTITLE = {Categories in algebra, geometry and mathematical physics},
    SERIES = {Contemp. Math.},
    VOLUME = {431},
     PAGES = {277--326},
 PUBLISHER = {Amer. Math. Soc., Providence, RI},
      YEAR = {2007},
   MRCLASS = {55U35 (18G55)},
  MRNUMBER = {2342834},
MRREVIEWER = {Nicola Gambino},
       DOI = {10.1090/conm/431/08278},
       URL = {https://doi.org/10.1090/conm/431/08278},
}

@misc   {lurie2011dagx,
  TITLE =   {Derived Algebraic Geometry {X}: Formal Moduli Problems},
  AUTHOR    =   {Jacob Lurie},
  HOWPUBLISHED   =   {\url{https://www.math.ias.edu/~lurie/papers/DAG-X.pdf}},
  YEAR={2011}
}

@article {barwickglasmannardin2018dualizingfibrations,
    AUTHOR = {Barwick, Clark and Glasman, Saul and Nardin, Denis},
     TITLE = {Dualizing cartesian and cocartesian fibrations},
   JOURNAL = {Theory Appl. Categ.},
  FJOURNAL = {Theory and Applications of Categories},
    VOLUME = {33},
      YEAR = {2018},
     PAGES = {Paper No. 4, 67--94},
   MRCLASS = {18D15},
  MRNUMBER = {3746613},
MRREVIEWER = {Josep Elgueta},
}

@article {boors2020twosegal,
    AUTHOR = {Bergner, Julia E. and Osorno, Ang\'{e}lica M. and Ozornova,
              Viktoriya and Rovelli, Martina and Scheimbauer, Claudia I.},
     TITLE = {The edgewise subdivision criterion for 2-{S}egal objects},
   JOURNAL = {Proc. Amer. Math. Soc.},
  FJOURNAL = {Proceedings of the American Mathematical Society},
    VOLUME = {148},
      YEAR = {2020},
    NUMBER = {1},
     PAGES = {71--82},
      ISSN = {0002-9939},
   MRCLASS = {18M60 (18N50 19D10 55U10)},
  MRNUMBER = {4042831},
MRREVIEWER = {Oliver R\"{o}ndigs},
       DOI = {10.1090/proc/14679},
       URL = {https://doi.org/10.1090/proc/14679},
}

@incollection {grothendieck1995descent,
    AUTHOR = {Grothendieck, Alexander},
     TITLE = {Technique de descente et th\'{e}or\`emes d'existence en g\'{e}om\'{e}trie
              alg\'{e}brique. {VI}. {L}es sch\'{e}mas de {P}icard: propri\'{e}t\'{e}s
              g\'{e}n\'{e}rales},
 BOOKTITLE = {S\'{e}minaire {B}ourbaki, {V}ol. 7},
     PAGES = {Exp. No. 236, 221--243},
 PUBLISHER = {Soc. Math. France, Paris},
      YEAR = {1995},
   MRCLASS = {14C22},
  MRNUMBER = {1611207},
}

@article {rasekh2018introduction,
    TITLE         = {Introduction to complete {S}egal spaces},
    AUTHOR        = {Rasekh, Nima},
    YEAR          =  {2018},
    JOURNAL =   {arXiv preprint},
    NOTE = {\href{https://arxiv.org/abs/1805.03131}{arXiv:1805.03131}}
}

@article {loregianriehl2020fibrations,
    AUTHOR = {Loregian, Fosco and Riehl, Emily},
     TITLE = {Categorical notions of fibration},
   JOURNAL = {Expo. Math.},
  FJOURNAL = {Expositiones Mathematicae},
    VOLUME = {38},
      YEAR = {2020},
    NUMBER = {4},
     PAGES = {496--514},
      ISSN = {0723-0869},
   MRCLASS = {18N45 (18D30)},
  MRNUMBER = {4177953},
MRREVIEWER = {Laurent Poinsot},
       DOI = {10.1016/j.exmath.2019.02.004},
       URL = {https://doi.org/10.1016/j.exmath.2019.02.004},
}

@article {martini2021yoneda,
      TITLE={Yoneda's lemma for internal higher categories}, 
      AUTHOR={Louis Martini},
      YEAR={2021},
      JOURNAL =   {arXiv preprint},
      NOTE = {\href{https://arxiv.org/abs/2103.17141}{arXiv:2103.17141}}
}

@book {maclane1971categories,
    AUTHOR = {MacLane, Saunders},
     TITLE = {Categories for the working mathematician},
    SERIES = {Graduate Texts in Mathematics, Vol. 5},
 PUBLISHER = {Springer-Verlag, New York-Berlin},
      YEAR = {1971},
     PAGES = {ix+262},
   MRCLASS = {18-02},
  MRNUMBER = {0354798},
MRREVIEWER = {H.-B. Brinkmann},
}

@book{mukherjee2022css,
AUTHOR = {Mukherjee, Chirantan},
TITLE = {Complete {S}egal Spaces as a Model of Higher Categories},
NOTE = {Thesis (M.Sc.)--University of Trento},
YEAR = {2022},
JOURNAL = {ProQuest Dissertations and Theses},
ISBN = {9798438788232},
URL = {https://www.lib.uwo.ca/cgi-bin/ezpauthn.cgi?url=http://search.proquest.com/dissertations-theses/complete-segal-spaces-as-model-higher-categories/docview/2666561440/se-2?accountid=15115},
}

@book {goerssjardine2009simplicialhomotopytheory,
	AUTHOR = {Goerss, Paul G. and Jardine, John F.},
	TITLE = {Simplicial homotopy theory},
	SERIES = {Modern Birkh\"{a}user Classics},
	NOTE = {Reprint of the 1999 edition [MR1711612]},
	PUBLISHER = {Birkh\"{a}user Verlag, Basel},
	YEAR = {2009},
	PAGES = {xvi+510},
	ISBN = {978-3-0346-0188-7},
	MRCLASS = {55U10 (18G55)},
	MRNUMBER = {2840650},
	DOI = {10.1007/978-3-0346-0189-4},
	URL = {https://doi.org/10.1007/978-3-0346-0189-4},
}

@article {riehlshulman2017rezkobject,
    AUTHOR = {Riehl, Emily and Shulman, Michael},
     TITLE = {A type theory for synthetic {$\infty$}-categories},
   JOURNAL = {High. Struct.},
  FJOURNAL = {Higher Structures},
    VOLUME = {1},
      YEAR = {2017},
    NUMBER = {1},
     PAGES = {147--224},
   MRCLASS = {18D05 (03G30 18G35 18G55)},
  MRNUMBER = {3912054},
MRREVIEWER = {Thomas Streicher},
       DOI = {10.1007/s42001-017-0005-6},
       URL = {https://doi.org/10.1007/s42001-017-0005-6},
}

@article {gepnerhaugsengnikolaus2017laxlimits,
    AUTHOR = {Gepner, David and Haugseng, Rune and Nikolaus, Thomas},
     TITLE = {Lax colimits and free fibrations in {$\infty$}-categories},
   JOURNAL = {Doc. Math.},
  FJOURNAL = {Documenta Mathematica},
    VOLUME = {22},
      YEAR = {2017},
     PAGES = {1225--1266},
      ISSN = {1431-0635},
   MRCLASS = {18D30 (18A30)},
  MRNUMBER = {3690268},
MRREVIEWER = {Josep Elgueta},
}

@article {johnstone1999exp,
    AUTHOR = {Johnstone, Peter},
     TITLE = {A note on discrete {C}onduch\'{e} fibrations},
   JOURNAL = {Theory Appl. Categ.},
  FJOURNAL = {Theory and Applications of Categories},
    VOLUME = {5},
      YEAR = {1999},
     PAGES = {No. 1, 1--11},
   MRCLASS = {18A22 (18B25)},
  MRNUMBER = {1667312},
MRREVIEWER = {Kimmo I. Rosenthal},
}

@article {bungeniefield2000exp,
    AUTHOR = {Bunge, Marta and Niefield, Susan},
     TITLE = {Exponentiability and single universes},
   JOURNAL = {J. Pure Appl. Algebra},
  FJOURNAL = {Journal of Pure and Applied Algebra},
    VOLUME = {148},
      YEAR = {2000},
    NUMBER = {3},
     PAGES = {217--250},
      ISSN = {0022-4049},
   MRCLASS = {18D15 (18B25 18F20 54C35)},
  MRNUMBER = {1758569},
MRREVIEWER = {Harvey Wolff},
       DOI = {10.1016/S0022-4049(98)00172-8},
       URL = {https://doi.org/10.1016/S0022-4049(98)00172-8},
}

@inproceedings {lawvere1970doctrines,
    AUTHOR = {Lawvere, F. William},
     TITLE = {Equality in hyperdoctrines and comprehension schema as an
              adjoint functor},
 BOOKTITLE = {Applications of {C}ategorical {A}lgebra ({P}roc. {S}ympos.
              {P}ure {M}ath., {V}ol. {XVII}, {N}ew {Y}ork, 1968)},
     PAGES = {1--14},
 PUBLISHER = {Amer. Math. Soc., Providence, R.I.},
      YEAR = {1970},
   MRCLASS = {18.10},
  MRNUMBER = {0257175},
MRREVIEWER = {H. Gonshor},
}

@incollection {waldhausen1985algebraick,
    AUTHOR = {Waldhausen, Friedhelm},
     TITLE = {Algebraic {$K$}-theory of spaces},
 BOOKTITLE = {Algebraic and geometric topology ({N}ew {B}runswick, {N}.{J}.,
              1983)},
    SERIES = {Lecture Notes in Math.},
    VOLUME = {1126},
     PAGES = {318--419},
 PUBLISHER = {Springer, Berlin},
      YEAR = {1985},
   MRCLASS = {18F25 (19D10 55N15 57Q60)},
  MRNUMBER = {802796},
MRREVIEWER = {V. P. Snaith},
       DOI = {10.1007/BFb0074449},
       URL = {https://doi.org/10.1007/BFb0074449},
}

@book{loregian2021coend,
doi = {10.1017/9781108778657},
url = {https://doi.org/10.1017%2F9781108778657},
year = 2021,
month = {June},
publisher = {Cambridge University Press},
author = {Fosco Loregian},  
title = {(Co)end Calculus}
}

@article{boors2018waldhausen,
author = {Julia E. Bergner and Angélica M. Osorno and Viktoriya Ozornova and Martina Rovelli and Claudia I. Scheimbauer},
title = {2-{S}egal sets and the {W}aldhausen construction},
journal = {Topology and its Applications},
volume = {235},
pages = {445-484},
year = {2018},
issn = {0166-8641},
doi = {https://doi.org/10.1016/j.topol.2017.12.009},
url = {https://www.sciencedirect.com/science/article/pii/S0166864117306454},

}

@article {barwick2017mackey,
    AUTHOR = {Barwick, Clark},
     TITLE = {Spectral {M}ackey functors and equivariant algebraic
              {$K$}-theory ({I})},
   JOURNAL = {Adv. Math.},
  FJOURNAL = {Advances in Mathematics},
    VOLUME = {304},
      YEAR = {2017},
     PAGES = {646--727},
      ISSN = {0001-8708},
   MRCLASS = {19L47 (19D99 55P91)},
  MRNUMBER = {3558219},
MRREVIEWER = {Anna Marie Bohmann},
       DOI = {10.1016/j.aim.2016.08.043},
       URL = {https://doi.org/10.1016/j.aim.2016.08.043},
}

@incollection {bergner2010surveyone,
    AUTHOR = {Bergner, Julia E.},
     TITLE = {A survey of {$(\infty,1)$}-categories},
 BOOKTITLE = {Towards higher categories},
    SERIES = {IMA Vol. Math. Appl.},
    VOLUME = {152},
     PAGES = {69--83},
 PUBLISHER = {Springer, New York},
      YEAR = {2010},
   MRCLASS = {18-02 (18D05 55-02 55U35 55U40)},
  MRNUMBER = {2664620},
       DOI = {10.1007/978-1-4419-1524-5\_2},
       URL = {https://doi.org/10.1007/978-1-4419-1524-5_2},
}

@article {segal1973configuration,
    AUTHOR = {Segal, Graeme},
     TITLE = {Configuration-spaces and iterated loop-spaces},
   JOURNAL = {Invent. Math.},
  FJOURNAL = {Inventiones Mathematicae},
    VOLUME = {21},
      YEAR = {1973},
     PAGES = {213--221},
      ISSN = {0020-9910},
   MRCLASS = {55D35},
  MRNUMBER = {331377},
MRREVIEWER = {J. P. May},
       DOI = {10.1007/BF01390197},
       URL = {https://doi.org/10.1007/BF01390197},
}

@misc{lurie2017ha,
  title={Higher algebra},
  author={Lurie, Jacob},
month={September},
  year={2017},
howpublished = {\url{http://www.math.ias.edu/~lurie/papers/HA.pdf}}
}

@article{hackneykock2022culf,
	title={Culf maps and edgewise subdivision},
	author={Hackney, Philip and Kock, Joachim},
	journal={arXiv preprint},
	year={2022},
	NOTE = {\href{https://arxiv.org/abs/2210.11191}{arXiv:2210.11191}},
}

@article {toen2005unicity,
    AUTHOR = {To{\"e}n, Bertrand},
     TITLE = {Vers une axiomatisation de la th\'{e}orie des cat\'{e}gories
              sup\'{e}rieures},
   JOURNAL = {$K$-Theory},
  FJOURNAL = {$K$-Theory. An Interdisciplinary Journal for the Development,
              Application, and Influence of $K$-Theory in the Mathematical
              Sciences},
    VOLUME = {34},
      YEAR = {2005},
    NUMBER = {3},
     PAGES = {233--263},
      ISSN = {0920-3036},
   MRCLASS = {55U35 (18D05 18G55)},
  MRNUMBER = {2182378},
MRREVIEWER = {Mamuka Jibladze},
       DOI = {10.1007/s10977-005-4556-6},
       URL = {https://doi.org/10.1007/s10977-005-4556-6},
}

@article {kelly2005enriched,
	AUTHOR = {Kelly, G. M.},
	TITLE = {Basic concepts of enriched category theory},
	NOTE = {Reprint of the 1982 original [Cambridge Univ. Press,
	Cambridge; MR0651714]},
	JOURNAL = {Repr. Theory Appl. Categ.},
	FJOURNAL = {Reprints in Theory and Applications of Categories},
	NUMBER = {10},
	YEAR = {2005},
	PAGES = {vi+137},
	MRCLASS = {18-02 (00B60 18D10 18D20)},
	MRNUMBER = {2177301},
}

@article {rasekh2023yoneda,
	AUTHOR = {Rasekh, Nima},
	TITLE = {Yoneda lemma for simplicial spaces},
	JOURNAL = {Appl. Categ. Structures},
	FJOURNAL = {Applied Categorical Structures. A Journal Devoted to
	Applications of Categorical Methods in Algebra, Analysis,
	Order, Topology and Computer Science},
	VOLUME = {31},
	YEAR = {2023},
	NUMBER = {4},
	PAGES = {Paper No. 27, 92},
	ISSN = {0927-2852,1572-9095},
	MRCLASS = {18N60 (18F20 18N40 18N50)},
	MRNUMBER = {4616631},
	DOI = {10.1007/s10485-023-09734-z},
	URL = {https://doi.org/10.1007/s10485-023-09734-z},
}

@Book{riehl2017context,
	Author = {Emily {Riehl}},
	Title = {{Category theory in context}},
	ISBN = {978-0-486-80903-8},
	Pages = {xvii + 234},
	Year = {2016},
	Publisher = {Mineola, NY: Dover Publications},
	Language = {English},
	MSC2010 = {18-01 18A05 18A15 01A75},
	Zbl = {1348.18001}
}

@book {hovey1999modelcategories,
    AUTHOR = {Hovey, Mark},
     TITLE = {Model categories},
    SERIES = {Mathematical Surveys and Monographs},
    VOLUME = {63},
 PUBLISHER = {American Mathematical Society, Providence, RI},
      YEAR = {1999},
     PAGES = {xii+209},
      ISBN = {0-8218-1359-5},
   MRCLASS = {55U35 (18D15 18G30 18G55)},
  MRNUMBER = {1650134},
MRREVIEWER = {Teimuraz Pirashvili},
}

@book {hirschhorn2003modelcategories,
    AUTHOR = {Hirschhorn, Philip S.},
     TITLE = {Model categories and their localizations},
    SERIES = {Mathematical Surveys and Monographs},
    VOLUME = {99},
 PUBLISHER = {American Mathematical Society, Providence, RI},
      YEAR = {2003},
     PAGES = {xvi+457},
      ISBN = {0-8218-3279-4},
   MRCLASS = {18G55 (55P60 55U35)},
  MRNUMBER = {1944041},
MRREVIEWER = {David A. Blanc},
}

\end{document}